\documentclass{amsart}
\usepackage{amsfonts}
\usepackage{amsmath,amssymb}
\usepackage{amsthm}
\usepackage{amscd}
\usepackage{graphics}
\usepackage{graphicx}

\theoremstyle{remark}{

\newtheorem{Ex}{{\rm Example}}

}
\theoremstyle{plain}
{

\newtheorem{Thm}{Theorem}

}

\begin{document}
\title[Regions represented as foliated forms and smooth maps onto them]{Regions represented as foliated forms and natural smooth maps onto them}
\author{Naoki kitazawa}
\keywords{Regions in Euclidean spaces. Smooth, real analytic, or real algebraic (real polynomial) functions and maps. Derivatives. Implicit function theorem. \\
\indent {\it \textup{2020} Mathematics Subject Classification}: Primary~26B10, 57R45, 58C05, 58C25. Secondary~26D07.}

\address{Osaka Central Advanced Mathematical Institute (OCAMI) \\
3-3-138 Sugimoto, Sumiyoshi-ku Osaka 558-8585
TEL: +81-6-6605-3103
}
\email{naokikitazawa.formath@gmail.com}
\urladdr{https://naokikitazawa.github.io/NaokiKitazawa.html}
\maketitle
\begin{abstract}
The author is interested in regions surrounded by hypersurfaces and natural smooth maps onto them respecting the canonical projections of the unit spheres and so-called {\it special generic} maps and {\it moment} maps, more generally. We consider situations where these regions are foliated via 1-dimensional families of functions and their zero sets (smoothly).

We including the author are also interested in explicit and nice functions obtained by composing the canonical projections and their topological or combinatorial properties. This is of singularity theory of differentiable maps and applications to differential topology and various real geometry. Explicitly, here, as a new challenge, we discuss the 1st derivative of such a function and  its critical set. 

\end{abstract}
\section{Introduction.}
\label{sec:1}
\subsection{One of interest of us including the author in singularity theory of differentiable maps and applications to real geometry.}
\label{subsec:1.1}
Singularity theory of differentiable maps and applications to differential topology and various real geometry is regarded as a fundamental and fascinating fields in mathematics, especially geometry. Theory of Morse functions is of celebrated related theory. Higher dimensional studies are launched in \cite{thom, whitney} and developing mainly due to Saeki from pioneering studies \cite{saeki1, saeki2}. We are also interested in explicit construction of functions and maps, rather than knowing existence via methods of differential equations and so-called "homotopy", where of course the latter theory is fundamental and important and celebrated theory has been presented in \cite{eliashberg1, eliashberg2} for example.

Related to this, first, we are interested in the canonical projections of the unit spheres, more generally so-called {\it special generic} maps, and (smooth maps locally represented as so-called) {\it moment} maps.
Special generic maps are discussed in \cite{burletderham, furuyaporto, saeki2} for example. For moment maps, see \cite{buchstaberpanov} for example. We do not need (non-trivial) knowledge on such classes of smooth maps.

 The author has been interested in natural functions with given topological properties and combinatorial ones represented by {\it Reeb spaces}. {\it Reeb spaces} of continuous functions are natural quotient spaces of the spaces of the domains obtained by contracting connected components of preimages to single points. The author has been also interested in reconstructing natural maps with given images represented by regions in Euclidean spaces. Such story dates back to the former half of 20th century and \cite{reeb} for example, and the birth of theory of so-called Morse functions. Morse functions are explained in traditional textbooks such as \cite{milnor1, milnor2}. The author is interested in reconstructing explicit and nice smooth functions with given Reeb spaces. This is pioneered by Sharko in \cite{sharko} by constructing smooth functions which are locally of elementary polynomials on closed surfaces. He has constructed locally such functions first and glue them globally. This is followed by various people and one of first related studies is \cite{masumotosaeki}. Later, \cite{gelbukh1, gelbukh2, gelbukh3, gelbukh4, michalak} are presented. One of related contribution of the author is reconstruction respecting shapes of preimages (level sets) in addition (\cite{kitazawa1, kitazawa2,kitazawa4}) and real algebraic construction (\cite{kitazawa3, kitazawa5, kitazawa6}) and real analytic one (\cite{kitazawa7}: see also
 preprints \cite{kitazawa8, kitazawa9, kitazawa10} of the author). In real analytic cases, we can not use technique in differential (smooth) situations for global construction, such as bump functions and partitions of the unity.
We do not consider Reeb spaces in our paper, where we present this as a related important topic. 

\subsection{Fundamental terminologies, notions, and notation.}
\label{subsec:1.2}
We introduce or review fundamental terminologies, notions and notation of the present paper.

For a map $c:X \rightarrow Y$ and a subset $Z \subset X$, let $c {\mid}_Z$ denote the restriction of $X$ to $Z$.
For a topological space $X$ and its subspace $Y$, ${\overline{Y}}^X$ means the closure of $Y$ in $X$. For a topological space $X$ we can define the (topological) dimension as its topological invariant such as a space homeomorphic to a topological manifold and a so-called {\it CW complex}, we use $\dim X$. 

Let ${\mathbb{R}}^k$ denote the $k$-dimensional Euclidean space (real affine space), with $\mathbb{R}:={\mathbb{R}}^1$. This is a smooth manifold and also a Riemannian manifold with the standard Euclidean metric.

For a differentiable manifold $X$ and a point $p \in X$, $T_p X$ is used for the tangent space at $p$, which is a real vector space of dimension $\dim X$. The tangent bundle $TX=\bigcup T_p X$ of $X$ is naturally defined and in the case $X$ is a Riemannian manifold, we can have the subbundle $UTX \subset TX$ whose fiber is a $(\dim X-1$)-dimensional sphere and defined as the space of all tangent vectors of length $1$. 
For a differentiable map $c:X \rightarrow Y$, a {\it singular} point $p \in X$ of $c$ is a point where the rank of the differential ${dc}_p:T_pX \rightarrow T_{c(p)}Y$, which is a linear map, is smaller than both $\dim X$ and $\dim Y$. 
We use $S(c)$ for the set of all singular points of $c$ (the {\it singular set} of $c$).
In the case $Y$ is $0$- or $1$-dimensional, we also use "{\it critical}" instead of singular.

A {\it real algebraic} ({\it real analytic}) manifold $X \subset {\mathbb{R}}^{k_1+k_2}$ means a submanifold with no boundary represented as a union of some connected components of the zero set of a real polynomial (real analytic) map $e:{\mathbb{R}}^{k_1+k_2} \rightarrow {\mathbb{R}}^{k_1}$ with $0<k_1<k_1+k_2$ and with $e {\mid}_X$ having no singular point. The $k$-dimensional unit sphere $S^k:=\{(x_1,\ldots, x_{k+1}) \mid {\Sigma}_{j=1}^{k+1} {x_j}^2=1\} \subset {\mathbb{R}}^{k+1}$ is a $k$-dimensional real algebraic manifold. The $k$-dimensional unit disk $D^k:=\{(x_1,\ldots, x_{k}) \mid {\Sigma}_{j=1}^{k} {x_j}^2 \leq 1 \}$ is the uniquely defined smooth compact and connected submanifold of ${\mathbb{R}}^k$ whose boundary is $S^{k-1}$. We use ${\pi}_{k_1+k_2,k_1}: {\mathbb{R}}^{k_1+k_2} \rightarrow {\mathbb{R}}^{k_1}$ (${\pi}_{k_1+k_2,k_1}(x_1,x_2):=x_1$ for $x=(x_1,x_2) \in {\mathbb{R}}^{k_1} \times {\mathbb{R}}^{k_2}={\mathbb{R}}^{k_1+k_2}$) for the canonical projection.


\subsection{Our study in the present paper, reconstructing nice smooth maps with given images which are also regions in Euclidean spaces.}
\label{subsec:1.3}
We review a main ingredient of \cite{kitazawa3}. See also preprints by the author such as \cite{kitazawa6, kitazawa7, kitazawa8, kitazawa9, kitazawa10}. We do not assume related non-trivial arguments of the preprints.
Let $m \geq n \geq 1$ be integers.
Let $l>0$ be an integer. Let $\{S_j\}_{j=1}^l \subset {\mathbb{R}}^n$ ($n \geq 2$) be a family of mutually disjoint $l$ smooth submanifolds with no boundary each of which is a union of finitely many connected components of the zero set of a smooth function $f_j:{\mathbb{R}}^{n} \rightarrow \mathbb{R}$ with ${f_j} {\mid}_{S_j}$ having no critical point.
Let $D_{\{S_j,f_j\}_{j=1}^l}$ be an open subset of ${\mathbb{R}}^n$ satisfying $D_{\{S_j,f_j\}_{j=1}^l}=\{x \mid f_j(x)>0\} \bigcap U_{D_{\{S_j,f_j\}_{j=1}^l}}$ for some open neighborhood $U_{D_{\{S_j,f_j\}_{j=1}^l}}$ of the closure $\overline{D_{\{S_j,f_j\}_{j=1}^l}}^{{\mathbb{R}}^n}$ in ${\mathbb{R}}^n$ and $S_j=\{x \mid f_j(x)=0\} \bigcap U_{D_{\{S_j,f_j\}_{j=1}^l}}$.

We can define a union $X_{D_{\{S_j,f_j\}_{j=1}^l},m}:=\{(x,{(y_j)}_{j=1}^{m-n+1}) \mid x \in \overline{D_{\{S_j,f_j\}_{j=1}^l}}^{{\mathbb{R}}^n}, {\prod}_{j=1}^l (f_j(x))-{\Sigma}_{j=1}^{m-n+1} {y_j}^2=0\}$ of connected components of the zero set of ${\prod}_{j=1}^l (f_j(x))-{\Sigma}_{j=1}^{m-n+1} {y_j}^2$. 
\begin{Thm}
\label{thm:1}
$X_{D_{\{S_j,f_j\}_{j=1}^l},m}$ is an $m$-dimensional smooth submanifold of ${\mathbb{R}}^{m+1}$ with no boundary.
Especially, if $f_j$ are real analytic {\rm (}real polynomial{\rm )} functions, then the manifold is also real analytic {\rm (}resp. real algebraic{\rm )}{\rm :} hereafter, in similar situations, we can say that a fact of this type holds, unless otherwise stated. 
\end{Thm}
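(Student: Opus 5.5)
Set $F(x,y):={\prod}_{j=1}^l f_j(x)-{\Sigma}_{i=1}^{m-n+1} {y_i}^2$ on ${\mathbb{R}}^{n} \times {\mathbb{R}}^{m-n+1}={\mathbb{R}}^{m+1}$, and write $D:=D_{\{S_j,f_j\}_{j=1}^l}$, $U:=U_{D_{\{S_j,f_j\}_{j=1}^l}}$ and $X:=X_{D_{\{S_j,f_j\}_{j=1}^l},m}$. The plan is to show three things: $X$ is the intersection of $F^{-1}(0)$ with the open set $U \times {\mathbb{R}}^{m-n+1}$; $X$ is closed in that open set; and $F$ has no singular point on $X$. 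The implicit function theorem then makes $X$ an $m$-dimensional smooth submanifold of ${\mathbb{R}}^{m+1}$ with no boundary. Since $F$ is real analytic (real polynomial) whenever the $f_j$ are, the same argument gives the real analytic (real algebraic) statement.

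\emph{Step 1: identifying $X$ inside $U \times {\mathbb{R}}^{m-n+1}$.} By hypothesis, $D=\{x \in U \mid f_j(x)>0 \text{ for all } j\}$ and $S_j=\{f_j=0\}\cap U$. The closure $\overline{D}^{{\mathbb{R}}^n}$ lies in $U$ and is contained in $\{f_j \geq 0 \text{ for all } j\}$. I will use that the $S_j$ are mutually disjoint and that $f_j$ has no critical point on $S_j$: each $f_j$ changes sign across $S_j$, so $\overline{D}^{{\mathbb{R}}^n}=D \sqcup \bigcup_j (S_j\cap \overline{D}^{{\mathbb{R}}^n})$. On this closure the product $\prod_j f_j$ is nonnegative, and it vanishes exactly on $\bigcup_j S_j \cap \overline{D}^{{\mathbb{R}}^n}$. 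Consequently $X$ is a closed subset of $U\times{\mathbb{R}}^{m-n+1}$. It is also a union of connected components of $F^{-1}(0)\cap (U\times{\mathbb{R}}^{m-n+1})$, because points of $F^{-1}(0)$ near $X$ must have $\prod f_j \geq 0$ and therefore, near $\overline{D}^{{\mathbb{R}}^n}$, lie over $\overline{D}^{{\mathbb{R}}^n}$. The precise meaning of the region assumption, namely which sign components of $U$ are included, has to be handled with care here; this step is mostly bookkeeping.

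\emph{Step 2: regularity.} The gradient is $\nabla F=(\nabla_x \prod_j f_j,\,-2y_1,\ldots,-2y_{m-n+1})$. Take $(x,y)\in X$.
\begin{itemize}
\item If some $y_i\neq 0$, then $\nabla F\neq 0$.
\item If $y=0$, then $\prod_j f_j(x)=0$, so $x\in S_{j_0}$ for exactly one $j_0$, by disjointness. Then $\nabla_x\prod_j f_j(x)=\bigl(\prod_{j\neq j_0} f_j(x)\bigr)\nabla f_{j_0}(x)$.
\end{itemize}
In the second case I need $f_j(x)\neq 0$ for $j\neq j_0$; this holds because $x\notin S_j$ and $x\in U$, so $f_j(x)>0$ by Step 1. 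I also need $\nabla f_{j_0}(x)\neq0$, which is the no-critical-point hypothesis on $S_{j_0}$. Hence $dF$ has rank $1$ at every point of $X$, and $X$ is a regular level set, hence a smooth hypersurface of dimension $m+1-1=m$.

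\emph{Step 3: no boundary.} A regular level set taken inside an open set has no boundary. Since $X$ is also closed in $U\times{\mathbb{R}}^{m-n+1}$, no boundary appears when we restrict to the components over $\overline{D}^{{\mathbb{R}}^n}$. The main obstacle is Step 1: verifying carefully that $\overline{D}^{{\mathbb{R}}^n}\cap S_j$ sits where $f_k>0$ for all $k\neq j$, and that restricting to $x\in\overline{D}^{{\mathbb{R}}^n}$ produces whole components rather than pieces with edges. The approach is to use the local sign-change of $f_j$ at the regular zero set $S_j$, which shows that near $S_j$ one side is in $D$ and the other has $f_j<0$.
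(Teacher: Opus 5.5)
Your proposal is correct and follows the paper's proof: the paper also splits into points lying over $D$, where some $\partial/\partial y_i$ is nonzero (equivalent on $X$ to your case $y\neq 0$), and points lying over $\bigsqcup_j S_j$, where some $\partial/\partial x_i$ is nonzero, and then applies the implicit function theorem. Your Steps 1 and 3 show that $X$ is both closed and open in $F^{-1}(0)$, so that restricting to $x\in\overline{D}^{\mathbb{R}^n}$ gives whole components with no boundary, and this makes explicit what the paper takes for granted when it calls $X$ a union of connected components of the zero set.
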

\begin{proof}
The proof is essentially presented first in \cite{kitazawa3}.
\ \\
Case 1-A Let $(x_0,(y_{0,j})_{j=1}^{m-n+1}) \in \{x,{(y_j)}_{j=1}^{m-n+1} \mid x \in \overline{D_{\{S_j,f_j\}_{j=1}^l}}^{{\mathbb{R}}^n}, {\prod}_{j=1}^l (f_j(x))-{\Sigma}_{j=1}^{m-n+1} {y_j}^2=0\}$ such that $x_0 \in D_{\{S_j,f_j\}_{j=1}^l}$. \\ In this case, the value of the partial derivative $\frac{\partial ({\prod}_{j=1}^l (f_j(x))-{\Sigma}_{j=1}^{m-n+1} {y_j}^2)}{\partial y_i}$ of the function ${\prod}_{j=1}^l (f_j(x))-{\Sigma}_{j=1}^{m-n+1} {y_j}^2$ is not $0$, for some $i$.\\
\ \\
Case 1-B Let $(x_0,(y_{0,j})_{j=1}^{m-n+1}) \in \{x,{(y_j)}_{j=1}^{m-n+1} \mid x \in \overline{D_{\{S_j,f_j\}_{j=1}^l}}^{{\mathbb{R}}^n}, {\prod}_{j=1}^l (f_j(x))-{\Sigma}_{j=1}^{m-n+1} {y_j}^2=0\}$ such that $x_0 \notin D_{\{S_j,f_j\}_{j=1}^l}$. \\ In this case, the value of the partial derivative $\frac{\partial ({\prod}_{j=1}^l (f_j(x))-{\Sigma}_{j=1}^{m-n+1} {y_j}^2)}{\partial x_i}$ of the function ${\prod}_{j=1}^l (f_j(x))-{\Sigma}_{j=1}^{m-n+1} {y_j}^2$ is not $0$, for some $i$. \\
\ \\
We use implicit function theorem to complete the proof.
\end{proof}
\begin{Ex}
\label{ex:1}
If we consider the case $D_{\{S_j,f_j\}_{j=1}^l}$ with $l=1$, $f_1(x_1,\ldots x_{n+1})=:1-{\Sigma}_{j=1}^{n+1} ({x_j}^2)$ and $D_{\{S_j,f_j\}_{j=1}^l}:=D^n-S^{n-1}$, then $X_{D_{\{S_j,f_j\}_{j=1}^l},m}=S^m$.
\end{Ex}
By the construction, the map ${\pi}_{m+1,n} {\mid}_{X_{D_{\{S_j,f_j\}_{j=1}^l},m}}$  is also a special generic map. Rigorously, a {\it special generic} map $c:X \rightarrow Y$ is a smooth map between manifolds with no boundary whose singular points have the form $c(x_1,\ldots x_{\dim X})=(x_1,\ldots x_{\dim Y-1},{\Sigma}_{j=1}^{\dim X-\dim Y+1} {x_{\dim Y-1+j}}^2)$ with suitable local coordinates.

\subsection{The content of the present paper.}
\label{subsec:1.4}
In the next section, we prove one of our main result (Theorem \ref{thm:2}). 
In short, we consider a foliated region $D_{\{S_j\}_{j \in J}}:=D_{\{S_j,f_j\}_{j=1}^l}$ and consider a slightly revised reconstruction in the Subsection \ref{subsec:1.3}.
We have an $m$-dimensional smooth submanifold of ${\mathbb{R}}^{m+2}$ with no boundary, instead. 
This is also regarded as a kind of our remark to \cite{kitazawa6} (\cite[Main Theorems]{kitazawa6}), and we also present a kind of related new theorems and exposition as Example \ref{ex:2} and Theorem \ref{thm:3}. Remember that we do not assume non-trival knowledge on the preprint \cite{kitazawa6}.
We also introduce related important examples as another main result in the third section. Here, as a new challenge, we discuss the {\it 1st derivative} of the function obtained by the restriction of ${\pi}_{m+2,1}$ to the obtained $m$-dimensional smooth submanifold of ${\mathbb{R}}^{m+2}$ of Theorem \ref{thm:2} and their critical points (Theorems \ref{thm:4} and \ref{thm:5}). 

\section{Our main result.}
We abuse the notation from the previous subsection. We consider a "foliated" region $D_{\{S_j,f_j\}_{j=1}^l}$. Hereafter, we use $D_{\{S_j\}_{j \in J}}$ with $J$ being a finite set, where the positive integer $l$ and the functions $f_j$ are irrelevant.

Again, $D_{\{S_j\}_{j \in J}} \subset {\mathbb{R}}^n$ is an open subset of ${\mathbb{R}}^n$, ${\overline{D_{\{S_j\}_{j \in J}}}}^{{\mathbb{R}}^n}-D_{\{S_j\}_{j \in J}}={\sqcup}_{j \in J} S_j$, and each $S_j$ is a smooth submanifold of dimension $n-1$ of ${\mathbb{R}}^n$ with no boundary and represented as a union of finitely many connected components of the zero set of some smooth function $g_j:{\mathbb{R}}^n \rightarrow \mathbb{R
}$ with $g_j {\mid}_{S_j}$ having no critical point.
\begin{Thm}
\label{thm:2}
Suppose that the region $D_{\{S_j\}_{j \in J}} \subset {\mathbb{R}}^n$ is represented in the following way.
\begin{enumerate}
\item \label{thm:2.1}
 For a smooth function $F_{D_{\{S_j\}_{j \in J}}}:{\mathbb{R}}^{n} \times I \rightarrow \mathbb{R}$, $D_{\{S_j\}_{j \in J}}=\{(x,t) \mid F_{D_{\{S_j\}_{j \in J}}}(x,t)=0\} \subset {\mathbb{R}}^{n} \times I$, where $I$ is an interval of either the form of the following with $a_1$, $a_2$ and $a$ being real numbers and called {\rm boundary points} of $I$.
\begin{itemize}
\item $I=\{a_1 \leq t \leq a_2\}$. 
\item $I=\{t \geq a\}$.
\end{itemize} 
\item \label{thm:2.2} 
For each point $t \in I$, $F_{D_{\{S_j\}_{j \in J}}}(x,t)$ canonically defines the function $F_{D_{\{S_j\}_{j \in J},t}}:{\mathbb{R}}^n \rightarrow \mathbb{R}$ {\rm (}$F_{D_{\{S_j\}_{j \in J},t}}(x):=F_{D_{\{S_j\}_{j \in J}}}(x,t)${\rm )}. Furthermore, the following are satisfied.
\begin{itemize}
\item For distinct $t_1,t_2 \in I$, ${F_{D_{\{S_j\}_{j \in J},t_1}}}^{-1}(0)$ and ${F_{D_{\{S_j\}_{j \in J},t_2}}}^{-1}(0)$ are mutually disjoint.
\item In the case $I=\{a_1 \leq t \leq a_2\}$, ${\sqcup}_{i=1}^2 {F_{D_{\{S_j\}_{j \in J},a_i}}}^{-1}(0)={\sqcup}_{j \in J} S_j$ and ${F_{D_{\{S_j\}_{j \in J},a_i}}}^{-1}(0)={\sqcup}_{j \in J_i} S_j$ for some $J_i \subset J$ with $J=J_1 \sqcup J_2$. In the case $I=\{t \geq a\}$, ${F_{D_{\{S_j\}_{j \in J},a}}}^{-1}(0)={\sqcup}_{j \in J} S_j$.
Furthermore, in the case $I=\{a_1 \leq t \leq a_2\}$, $F_{D_{\{S_j\}_{j \in J},a_i}} {\mid}_{{F_{D_{\{S_j\}_{j \in J},a_i}}}^{-1}(0)}$ has no critical point, and in the case $I=\{t \geq a\}$, $F_{D_{\{S_j\}_{j \in J},a}} {\mid}_{{F_{D_{\{S_j\}_{j \in J},a}}}^{-1}(0)}$ has no critical point.
\end{itemize}
\item \label{thm:2.3} The value of the partial derivative $\frac{\partial F_{D_{\{S_j\}_{j \in J}}}}{\partial t}$ at each $(x,t)=(x_0,t_0) \in {\mathbb{R}}^n \times I$ is always non-zero.
\end{enumerate}
Let $m \geq n \geq 1$ be an integer.
In this situation, we have an $m$-dimensional smooth submanifold $X_{F_{D_{\{S_j\}_{j \in J}}},m}$ of ${\mathbb{R}}^{m+2}$ with no boundary.
\end{Thm}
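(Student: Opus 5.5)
We will first make the construction explicit, since the statement does not spell out the manifold. We identify ${\mathbb{R}}^{m+2}={\mathbb{R}}^n \times \mathbb{R} \times {\mathbb{R}}^{m-n+1}$ with coordinates $(x,t,(y_j)_{j=1}^{m-n+1})$. We choose a one-variable polynomial $h_I$ that is non-negative exactly on $I$ and has non-vanishing derivative at the boundary points of $I$:
\begin{itemize}
\item $h_I(t):=(t-a_1)(a_2-t)$ when $I=\{a_1 \leq t \leq a_2\}$, where we assume $a_1<a_2$;
\item $h_I(t):=t-a$ when $I=\{t \geq a\}$.
\end{itemize}
We then set
$$X_{F_{D_{\{S_j\}_{j \in J}}},m}:=\{(x,t,(y_j)_{j=1}^{m-n+1}) \mid F_{D_{\{S_j\}_{j \in J}}}(x,t)=0,\ h_I(t)-{\Sigma}_{j=1}^{m-n+1} {y_j}^2=0\}.$$
The second equation forces $h_I(t) \geq 0$, that is, $t \in I$. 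So $F_{D_{\{S_j\}_{j \in J}}}$ is only evaluated where it is defined, and the set is closed in ${\mathbb{R}}^{m+2}$. It is the common zero set of the map $e:=(F_{D_{\{S_j\}_{j \in J}}},\ h_I(t)-\Sigma {y_j}^2)$ into ${\mathbb{R}}^2$. If $F_{D_{\{S_j\}_{j \in J}}}$ is only given on ${\mathbb{R}}^n \times I$, we use a smooth extension near the boundary points; this does not affect points of the set.

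As in the proof of Theorem \ref{thm:1}, it suffices to show that the Jacobian of $e$ has rank $2$ at every point of the set. The implicit function theorem then gives an $(m+2-2)=m$-dimensional submanifold with no boundary. The two rows of the Jacobian are
$$\left(\frac{\partial F_{D_{\{S_j\}_{j \in J}}}}{\partial x},\ \frac{\partial F_{D_{\{S_j\}_{j \in J}}}}{\partial t},\ 0\right) \quad \text{and} \quad \left(0,\ h_I'(t),\ -2y\right).$$
We split into cases in the same spirit as Cases 1-A and 1-B of Theorem \ref{thm:1}.

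Case A: $y \neq 0$. By condition (\ref{thm:2.3}), $\frac{\partial F_{D_{\{S_j\}_{j \in J}}}}{\partial t}\neq 0$. Also some $-2y_i \neq 0$. The $2 \times 2$ minor in the columns $t$ and $y_i$ is triangular with non-zero diagonal, so the rank is $2$.

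Case B: $y=0$. Then $h_I(t)=0$, so $t$ is a boundary point of $I$, and $h_I'(t)\neq 0$ there. Since $F_{D_{\{S_j\}_{j \in J}}}(x,t)=0$, the point $x$ lies in ${F_{D_{\{S_j\}_{j \in J},t}}}^{-1}(0)$. Condition (\ref{thm:2.2}) says this boundary function has no critical point on its zero set, so some $\frac{\partial F_{D_{\{S_j\}_{j \in J}}}}{\partial x_i}\neq 0$. The minor in the columns $x_i$ and $t$ is again triangular with non-zero diagonal, so the rank is $2$.

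If $F_{D_{\{S_j\}_{j \in J}}}$ is real analytic (real polynomial), then $e$ is real analytic (real polynomial). So the manifold is real analytic (real algebraic) in the sense of Subsection \ref{subsec:1.2}, with $k_1=2$.

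The computation itself is routine. The main obstacle is interpretive: fixing the intended definition of $X_{F_{D_{\{S_j\}_{j \in J}}},m}$ and of the hypothesis ``$D_{\{S_j\}_{j \in J}}=\{F=0\}$''. We read the latter as saying that the zero set of $F_{D_{\{S_j\}_{j \in J}}}$ projects onto the closure of $D_{\{S_j\}_{j \in J}}$, foliated by the level sets. Condition (\ref{thm:2.3}) together with the disjointness in (\ref{thm:2.2}) makes $t$ a well-defined smooth function $\tau$ on $\overline{D_{\{S_j\}_{j \in J}}}^{{\mathbb{R}}^n}$, by the implicit function theorem. From this viewpoint $X_{F_{D_{\{S_j\}_{j \in J}}},m}$ is the graph-type analogue of Theorem \ref{thm:1} with $f=h_I \circ \tau$, which serves as a consistency check. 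We also need the non-degeneracy $a_1<a_2$, so that $h_I'$ does not vanish at the boundary points.
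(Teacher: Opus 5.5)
Your proposal is correct and follows the paper's proof. You use the same set cut out by $F_{D_{\{S_j\}_{j \in J}}}$ and $G=h_I(t)-\Sigma {y_j}^2$, and the same split into interior versus boundary values of $t$ (equivalently $y\neq 0$ versus $y=0$). In each case you use the same non-vanishing partial derivatives, feeding into the implicit function theorem.

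The paper simply lists which partial derivatives are non-zero in each case. Your triangular $2\times 2$ minors turn that list into an explicit rank-$2$ check. Your remarks on smoothly extending $F_{D_{\{S_j\}_{j \in J}}}$ past the boundary points of $I$ and on needing $a_1<a_2$ also make explicit what the paper leaves implicit.
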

\begin{proof}
We define $X_{F_{D_{\{S_j\}_{j \in J}}},m}:=\{(x,t,{(y_j)}_{j=1}^{m-n+1}) \in {\mathbb{R}}^n \times I \times {\mathbb{R}}^{m-n+1} \mid F_{D_{\{S_j\}_{j \in J}}}(x,t)=0, G_{D_{\{S_j\}_{j \in J}}}(t,{(y_j)}_{j=1}^{m-n+1}):=(t-a_1)(a_2-t)-{\Sigma}_{j=1}^{m-n+1} {y_j}^2=0\}=\{(x,t,{(y_j)}_{j=1}^{m-n+1}) \in {\mathbb{R}}^n \times \mathbb{R} \times {\mathbb{R}}^{m-n+1} \mid F_{D_{\{S_j\}_{j \in J}}}(x,t)=0, G_{D_{\{S_j\}_{j \in J}}}(t,{(y_j)}_{j=1}^{m-n+1}):=(t-a_1)(a_2-t)-{\Sigma}_{j=1}^{m-n+1} {y_j}^2=0\}$ in the case
 $I=\{a_1 \leq t \leq a_2\}$ and $X_{F_{D_{\{S_j\}_{j \in J}}},m}:=\{(x,t,{(y_j)}_{j=1}^{m-n+1}) \in {\mathbb{R}}^n \times I \times {\mathbb{R}}^{m-n+1} \mid F_{D_{\{S_j\}_{j \in J}}}(x,t)=0, G_{D_{\{S_j\}_{j \in J}}}(t,{(y_j)}_{j=1}^{m-n+1}):=(t-a)-{\Sigma}_{j=1}^{m-n+1} {y_j}^2=0\}=\{(x,t,{(y_j)}_{j=1}^{m-n+1}) \in {\mathbb{R}}^n \times \mathbb{R} \times {\mathbb{R}}^{m-n+1} \mid F_{D_{\{S_j\}_{j \in J}}}(x,t)=0, G_{D_{\{S_j\}_{j \in J}}}(t,{(y_j)}_{j=1}^{m-n+1}):=(t-a)-{\Sigma}_{j=1}^{m-n+1} {y_j}^2=0\}$ in the case
 $I=\{t \geq a\}$. We prove that this is our desired case. \\
\ \\
Case 2-A $(x,t,{(y_j)}_{j=1}^{m-n+1}) \in X_{F_{D_{\{S_j\}_{j \in J}}},m}$, where  $t$ is not a boundary point of $I$. \\
The value of the derivative $\frac{\partial {F_{D_{\{S_j\}_{j \in J}}}}}{\partial t}$ at the point is not $0$, by the assumption. That of the derivative $\frac{\partial  G_{D_{\{S_j\}_{j \in J}}}}{\partial t}$ at the point is not $0$ except at $t=\frac{a_1+a_2}{2}$ in the case $I=\{a_1 \leq t \leq a_2\}$.
The value of the derivative $\frac{\partial {F_{D_{\{S_j\}_{j \in J}}}}}{\partial y_j}$ is naturally defined and always $0$. That of the derivative $\frac{\partial G_{D_{\{S_j\}_{j \in J}}}}{\partial y_{j_0}}$ for some $y_{j_0}$ at the point is not $0$. \\
\ \\
Case 2-B  $(x,t,{(y_j)}_{j=1}^{m-n+1}) \in X_{F_{D_{\{S_j\}_{j \in J}}},m}$, where  $t$ is a boundary point of $I$. \\
The value of the derivative $\frac{\partial {F_{D_{\{S_j\}_{j \in J}}}}}{\partial t}$ at the point is not $0$, by the assumption. The value of the derivative $\frac{\partial {F_{D_{\{S_j\}_{j \in J}}}}}{\partial x_{j_0}}$ at the point is not $0$ for some $x_{j_0}$ by the assumption that $S_j$ is a smooth submanifold of dimension $n-1$ with no boundary and a union of finitely many connected components of the zero set of the smooth function $F_{D_{\{S_j\}_{j \in J},a_i}}$ ($F_{D_{\{S_j\}_{j \in J},a}}$) with the restriction to ${F_{D_{\{S_j\}_{j \in J},a_i}}}^{-1}(0)$ (resp. ${F_{D_{\{S_j\}_{j \in J},a}}}^{-1}(0)$) having no critical point. The value of the derivative $\frac{\partial  G_{D_{\{S_j\}_{j \in J}}}}{\partial t}$ at the point is not $0$. The value of the derivative $\frac{\partial G_{D_{\{S_j\}_{j \in J}}}}{\partial x_j}$ at the point is naturally defined and always $0$.\\
\ \\
We remember Case 2-A and Case 2-B. By virtue of implicit function theorem, this completes the proof.
\end{proof}
Note that the case $I=\{t \geq a\}$ of Theorem \ref{thm:2} is a kind of our short remark to \cite{kitazawa6} (\cite[Main Theorems]{kitazawa6}). 
The case $I=\{a_1 \leq t \leq a_2\}$ is a statement of new type and essential in our new result Theorems \ref{thm:4} and \ref{thm:5}, presented in the next section.
In such a situation, Theorem \ref{thm:3} is also a theorem, remarking on \cite{kitazawa6} (\cite[Main Theorem 3]{kitazawa6}). 
\begin{Thm}
\label{thm:3}
\begin{enumerate}
\item In Theorem \ref{thm:2}, in the case $I=\{t \geq a\}$, $G_{D_{\{S_j\}_{j \in J}}}(t,{(y_j)}_{j=1}^{m-n+1}):=t-a-{\Sigma}_{j=1}^{m-n+1} {y_j}^2=0$ can be replaced by $G_{D_{\{S_j\}_{j \in J}}}(t,{(y_j)}_{j=1}^{m-n+1}):=G_{+,a}(t)-{\Sigma}_{j=1}^{m-n+1} {y_j}^2$ with a smooth function $G_{+,a}:\mathbb{R} \rightarrow \mathbb{R}$ such that $\{t \mid G_{+,a}(t)>0\}=I-\{a\}$, that $G_{+,a}(a)=0$, and that at $a$, the value of th 1st derivative is not $0$. For this, refer to and compare with \cite[Main Theorem 3]{kitazawa6}, where we do not assume related non-trivial knowledge.
\item In Theorem \ref{thm:2}, in the case $I=\{a_1 \leq t \leq a_2\}$, $G_{D_{\{S_j\}_{j \in J}}}(t,{(y_j)}_{j=1}^{m-n+1}):=(t-a_1)(a_2-t)-{\Sigma}_{j=1}^{m-n+1} {y_j}^2$ can be replaced by $G_{D_{\{S_j\}_{j \in J}}}(t,{(y_j)}_{j=1}^{m-n+1}):=G_{+,a_1,a_2}(t)-{\Sigma}_{j=1}^{m-n+1} {y_j}^2$ with a smooth function $G_{+,a_1,a_2}:\mathbb{R} \rightarrow \mathbb{R}$ such that $\{t \mid G_{+,a_1,a_2}(t)>0\}=I-\{a_1,a_2\}$, that $G_{+,a}(a_i)=0$ for $i=1,2$, and that at each $a_1$ and $a_2$, the value of the 1st derivative is not $0$. For example, consider the case $G_{+,a_1,a_2}(t):={(\frac{a_2-a_1}{2})}^2-{(t-\frac{a_1+a_2}{2})}^2$.
\end{enumerate}
\end{Thm}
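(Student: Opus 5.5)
The plan is to repeat the proof of Theorem \ref{thm:2} word for word, but with $G_{D_{\{S_j\}_{j \in J}}}$ replaced by the new function $G_{+}(t)-{\Sigma}_{j=1}^{m-n+1} {y_j}^2$. Here $G_{+}$ stands for $G_{+,a}$ in part (1) and for $G_{+,a_1,a_2}$ in part (2). We set
\[
X:=\{(x,t,{(y_j)}_{j=1}^{m-n+1}) \mid F_{D_{\{S_j\}_{j \in J}}}(x,t)=0,\ G_{+}(t)-{\Sigma}_{j=1}^{m-n+1} {y_j}^2=0\}.
\]
On the zero set of the second equation we have $G_{+}(t) \geq 0$. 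The hypothesis $\{t \mid G_{+}(t)>0\}=I-\{a\}$ (resp. $I-\{a_1,a_2\}$), together with $G_{+}(a)=0$ (resp. $G_{+}(a_i)=0$), then shows that $t$ ranges over exactly $I$. Hence the condition $t \in I$ is automatic and $X$ is a closed zero set in ${\mathbb{R}}^{m+2}$ of the map $(F_{D_{\{S_j\}_{j \in J}}},G)$. The only hypotheses on $G_{+}$ that the original proof actually used are the sign pattern of $G_{+}$ and the non-vanishing of its derivative at the boundary points. The particular formulas $t-a$ and $(t-a_1)(a_2-t)$ played no other role.

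Next we check, at each point of $X$, that the $2 \times (m+2)$ Jacobian of $(F,G)$ has rank $2$, and then apply the implicit function theorem. We split into two cases exactly as in Case 2-A and Case 2-B.

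\textbf{Interior points ($t$ not a boundary point of $I$).} Here $G_{+}(t)>0$, so some $y_{j_0} \neq 0$ and $\partial G/\partial y_{j_0}=-2y_{j_0}\neq 0$. Since $F$ does not depend on $y$, we use the columns $t$ and $y_{j_0}$, where $\partial F/\partial t \neq 0$ by condition (\ref{thm:2.3}). These give a triangular $2\times 2$ minor with nonzero determinant $\frac{\partial F}{\partial t}\cdot(-2y_{j_0})$. In this case the derivative $G_{+}'(t)$ is irrelevant.

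\textbf{Boundary points ($t=a$, or $t=a_i$).} Here all $y_j=0$, so $G$ has gradient $(0,G_{+}'(t),0)$, and $G_{+}'(t) \neq 0$ by hypothesis. By condition (\ref{thm:2.2}), $x$ lies in ${F_{D_{\{S_j\}_{j \in J}},t}}^{-1}(0)$ and that function has no critical point there, so some $\partial F/\partial x_{i_0}\neq 0$. Since $G$ does not depend on $x$, the columns $x_{i_0}$ and $t$ give the minor $\frac{\partial F}{\partial x_{i_0}}\cdot G_{+}'(t)\neq 0$.

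In both cases the rank is $2$, so $X$ is an $m$-dimensional smooth submanifold of ${\mathbb{R}}^{m+2}$ with no boundary. If $F$ and $G_{+}$ are real analytic (polynomial), the same argument gives the real analytic (resp. algebraic) version. For part (2) we also verify the given example: $G_{+,a_1,a_2}(t)={(\frac{a_2-a_1}{2})}^2-{(t-\frac{a_1+a_2}{2})}^2$ vanishes exactly at $a_1,a_2$, is positive exactly on the open interval, and has derivative $\mp(a_2-a_1)\neq 0$ at $a_1$ and $a_2$ respectively.

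I do not expect a genuine obstacle. The one point needing care is that $X$ has no boundary, which comes down to the fact that $t \in I$ is automatically enforced by $G_{+}(t)={\Sigma}_{j=1}^{m-n+1} {y_j}^2\geq 0$. This requires $G_{+}<0$ off $I$; the stated equality of the positivity set gives $G_{+}\le 0$ off $I$, and we may need to assume strict negativity there.
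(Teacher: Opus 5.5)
Your proposal follows the paper's proof. The paper reruns Cases 2-A and 2-B of Theorem~\ref{thm:2} and notes that $\partial G/\partial t$ at non-boundary points of $I$ is never used. Your two $2\times 2$ minors (columns $t, y_{j_0}$ in the interior, columns $x_{i_0}, t$ at the endpoints) make exactly this precise.

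The one thing to fix is how you handle $t\in I$. Your sentence that $t$ ``ranges over exactly $I$'' is false for the unrestricted zero set. The hypotheses allow $G_{+}$ to vanish off $I$, for instance at a double zero, where the rank of $(F,G)$ drops and $F$ need not even be defined. But no strict-negativity hypothesis is needed.

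Keep $t\in I$ in the definition of $X$, as the paper does in Theorem~\ref{thm:2}; then $X$ is closed. It remains to check that $X$ has no boundary at the endpoints, and this uses only the given assumptions:
\begin{enumerate}
\item Since $G_{+}>0$ just inside $I$ and $G_{+}'\neq 0$ at the endpoints, we get $G_{+}'(a_1)>0$ and $G_{+}'(a_2)<0$ (resp.\ $G_{+}'(a)>0$).
\item Hence $G_{+}<0$ just outside $I$.
\item On the zero set, $G_{+}(t)={\Sigma}_{j} {y_j}^2\geq 0$. So near a point with $t=a_i$, the local $m$-manifold given by the implicit function theorem (using a smooth extension of $F$ across $t=a_i$) already lies in $\{t\in I\}$ and coincides with $X$ there.
\end{enumerate}

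There is also a trivial sign slip in the example: $G_{+,a_1,a_2}'(a_1)=a_2-a_1$ and $G_{+,a_1,a_2}'(a_2)=-(a_2-a_1)$, so the signs are $\pm$, not $\mp$. Only non-vanishing matters, so this does not affect the argument.
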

\begin{proof}
We can understand this immediately from the story of our proof.  More precisely, the value of the derivative $\frac{\partial  G_{D_{\{S_j\}_{j \in J}}}}{\partial t}$ at the point $t$ which is not a boundary point of $I$ is not important.
This completes the proof.
\end{proof}
 Examples \ref{ex:2}  and \ref{ex:3} are also exposition of a new type.
\begin{Ex}
 \label{ex:2}

 Let $J$ be a one-element set $J_0=\{j_0\}$ and $S_{j_0}:=\{(x_1,x_2) \in {\mathbb{R}}^2 \mid {x_1}^2+{x_2}^2-1=0\}$. We put $I=\{t \geq a=0\}$ and $F_{D_{\{S_{j}\}_{j \in J_0}}}(x,t)={x_1}^2+{x_2}^2-(1-t)$ to have a desired case of Theorem \ref{thm:2}.

We consider the case of Example \ref{ex:1}: by sending $(x_1,x_2,{(y_j)}_{j=1}^{m-1}) \in S^m$ satisfying ${x_1}^2+{x_2}^2-(1-t)=0$ with $t \geq 0$ to $(x_1,x_2,t,{(y_j)}_{j=1}^{m-1})$, we have the present case.
\end{Ex}
\begin{Ex}
\label{ex:3}
 Let $J$ be a two-element set $J_1=\{1,2\}$ and $S_{j}:=\{(x_1,x_2) \in {\mathbb{R}}^2 \mid {x_1}^2+{x_2}^2-j^2=0\}$. We put $I=\{a_1=1 \leq t \leq a_2=2\}$ and $F_{D_{\{S_{j}\}_{j \in J_1}}}(x,t)={x_1}^2+{x_2}^2-t^2$ to have a desired case of Theorem \ref{thm:2}.

We consider the case of Theorem \ref{thm:1} with $l=2$ and $(f_1(x_1,x_2):={x_1}^2+{x_2}^2-1,f_2(x_1,x_2):=4-{x_1}^2-{x_2}^2,S_j=\{(x_1,x_2) \mid f_j(x_1,x_2)=0\})$. By sending $(x_1,x_2,{(y_j)}_{j=1}^{m-1}) \in X_{D_{\{S_j,f_j\}_{j=1}^l},m}$ satisfying ${x_1}^2+{x_2}^2-t^2=0$ with $1 \leq t \leq 2$ to $(x_1,x_2,t,{(\frac{\sqrt{(t-1)(2-t)}}{\sqrt{t^2-1}\sqrt{4-t^2}}y_j)}_{j=1}^{m-1})=((x_1,x_2,t,{(\frac{1}{\sqrt{t+1}\sqrt{t+2}}y_j)}_{j=1}^{m-1}))$, we have the present case: rigorously, note that  $(x_1,x_2,t,{(\frac{\sqrt{(t-1)(2-t)}}{\sqrt{t^2-1}\sqrt{4-t^2}}y_j)}_{j=1}^{m-1})$ is for $1<t<2$ and that $((x_1,x_2,t,{(\frac{1}{\sqrt{t+1}\sqrt{t+2}}y_j)}_{j=1}^{m-1}))$ is for $1 \leq t \leq 2$.

\end{Ex}
\section{Our additional result and examples related to Theorems \ref{thm:2} and \ref{thm:3}, (canonically defined) derivatives of the functions defined by the canonical projections.}
\begin{Thm}
\label{thm:4}
We have an example of Theorem \ref{thm:2} {\rm (}Theorem \ref{thm:3}{\rm )} as follows.
\begin{enumerate}
\item Let $n=2$.
\item Choose an arbitrary smooth function $c_{+}:\mathbb{R} \rightarrow \mathbb{R}$ whose value at each point is positive. 
\item Let $a_1<0$ be an arbitrary negative number and $a_2=1$. Let $I=\{a_1 \leq t \leq a_2\}$.
\item We define $J:=\{1,2\}$ and ${F_{D_{\{S_j\}_{j \in J}}}}(x_1,x_2,t)=x_2-tc_{+}(x_1)$. 
\end{enumerate}

We have our desired $m$-dimensional manifold $X_{F_{D_{\{S_j\}_{j \in J}}},m} \subset {\mathbb{R}}^{m+2}$ with ${\pi}_{m+2,1} {\mid}_{X_{F_{D_{\{S_j\}_{j \in J}}},m}}$ having no critical point.
\end{Thm}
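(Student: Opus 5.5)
The plan is to verify the three hypotheses of Theorem \ref{thm:2} for $F_{D_{\{S_j\}_{j \in J}}}(x_1,x_2,t)=x_2-tc_{+}(x_1)$ with $I=\{a_1 \leq t \leq 1\}$. Then I would identify $X_{F_{D_{\{S_j\}_{j \in J}}},m}$ explicitly as a graph over a product $\mathbb{R} \times \Sigma$, on which ${\pi}_{m+2,1}$ becomes a product projection.

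\emph{Hypotheses of Theorem \ref{thm:2}.} For (\ref{thm:2.3}), $\frac{\partial F}{\partial t}=-c_{+}(x_1)<0$ everywhere, by positivity of $c_{+}$. For the disjointness in (\ref{thm:2.2}), the zero set of $F_{D_{\{S_j\}_{j \in J}},t}$ is the graph $x_2=tc_{+}(x_1)$. A point $(x_1,x_2)$ determines $t=x_2/c_{+}(x_1)$ uniquely, so zero sets for distinct $t$ are disjoint. We put $S_1:=\{x_2=a_1c_{+}(x_1)\}$ and $S_2:=\{x_2=c_{+}(x_1)\}$, with $J_1=\{1\}$ and $J_2=\{2\}$. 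These are disjoint smooth curves, since $a_1<0<1$ and $c_{+}>0$. The functions $F_{D_{\{S_j\}_{j \in J}},a_i}$ have $\frac{\partial}{\partial x_2}=1$, so they have no critical points on their zero sets. The region is $D_{\{S_j\}_{j \in J}}=\{a_1c_{+}(x_1)<x_2<c_{+}(x_1)\}$, an open strip with boundary $S_1 \sqcup S_2$. Hence Theorem \ref{thm:2} (or Theorem \ref{thm:3}, with any admissible $G_{+,a_1,a_2}$) applies and yields the $m$-dimensional manifold $X:=X_{F_{D_{\{S_j\}_{j \in J}}},m}$.

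\emph{Structure of $X$.} The $G$-equation involves only $(t,y)$. Let $\Sigma:=\{(t,y) \in \mathbb{R} \times {\mathbb{R}}^{m-1} \mid (t-a_1)(1-t)-{\Sigma}_{j} {y_j}^2=0\}$. The gradient of the defining function is $(a_1+1-2t,-2y)$. It vanishes only at $t=\frac{a_1+1}{2}$, $y=0$, where the function is positive, so $0$ is a regular value and $\Sigma$ is a smooth closed $(m-1)$-dimensional submanifold (an ellipsoid, hence a sphere). The general $G_{+,a_1,a_2}$ of Theorem \ref{thm:3} would need the same regular-value check. Since $F=0$ means $x_2=tc_{+}(x_1)$, the map $\Phi:\mathbb{R} \times \Sigma \rightarrow X$, $\Phi(x_1,(t,y)):=(x_1,tc_{+}(x_1),t,y)$, is a smooth bijection. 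Its inverse $(x_1,x_2,t,y) \mapsto (x_1,(t,y))$ is the restriction of a linear projection, so $\Phi$ is a diffeomorphism.

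\emph{No critical points.} We have ${\pi}_{m+2,1} \circ \Phi(x_1,(t,y))=x_1$, which is the projection $\mathbb{R} \times \Sigma \rightarrow \mathbb{R}$ and hence a submersion. So ${\pi}_{m+2,1} {\mid}_{X}$ has no critical point. The main point to handle with care is that $\Phi$ is really a diffeomorphism onto $X$ with its submanifold structure from Theorem \ref{thm:2}, rather than merely a bijection. This follows because $X$ is an embedded submanifold and $\Phi$ is an injective immersion with a continuous inverse.
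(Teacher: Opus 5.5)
Your proposal is correct and is essentially the direct verification the paper intends (its proof only says that everything follows immediately from the definitions). Your identification of $X$ with $\mathbb{R}\times\Sigma$ via $(x_1,(t,y))\mapsto(x_1,tc_{+}(x_1),t,y)$, under which ${\pi}_{m+2,1}$ becomes the projection onto the $\mathbb{R}$ factor, supplies the details the paper leaves out, and the regular-value check you leave open for a general $G_{+,a_1,a_2}$ of Theorem \ref{thm:3} does go through: for $t\in I$ the gradient $(G_{+,a_1,a_2}'(t),-2y)$ vanishes on the zero set only if $y=0$, i.e.\ $t\in\{a_1,a_2\}$, where $G_{+,a_1,a_2}'\neq 0$ by hypothesis.
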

\begin{proof}
By the definition, we can prove all statements here immediately.
\end{proof}

We discuss Theorem \ref{thm:4} further.

Related to this, we discuss a canonically defined derivative of ${\pi}_{m+k,1} {\mid}_{X^m}$ the manifold $X^m$ of whose domain is an $m$-dimensional smooth submanifold of ${\mathbb{R}}^{m+k}$ ($k>0$) with no boundary and which has no critical point. We can also consider a so-called ({\it positive}) {\it gradient flow}, which is a section of the tangent bundle $TX^m$ over $X^m$, which is associated to the function ${\pi}_{m+k,1} {\mid}_{X^m}$, and the value of which is a  non-zero vector, at each point of $X^m$.
For each point $p \in X^m$, we can have the uniquely defined element $v_{{\rm u},p,+} \in T_p X^m \bigcap UTX^m$ which is orthogonal to every element of $T_p X^m$ mapped to the zero vector of $T_{{\pi}_{m+k,1}(p)} \mathbb{R}$ and which is mapped to a vector of $T_{{\pi}_{m+k,1}(p)} \mathbb{R}$ parallel to a vector $a_{v_{{\rm u},p,+}} \in \mathbb{R}$ identified canonically with a positive number $a_{v_{{\rm u},p,+}}$. 
In other words, we consider the tangent vector of length one along the positive gradient flow, uniquely, and we can project it canonically. This defines the {\it 1st derivative} ${{\pi}_{m+k,1} {\mid}_{X^m}}^{\prime}:X^m \rightarrow \mathbb{R}$ as the real-valued smooth function whose value at each point $p \in X^m$ is $a_{v_{{\rm u},p,+}}>0$ and $a_{v_{{\rm u},p,+}} \leq 1$. Although we do not explain related fundamental theory or arguments rigorously, this definition is compatible with the 1st derivative of a differentiable real-valued function $c:\mathbb{R} \rightarrow \mathbb{R}$ in the classical calculus and various similar definitions.
  
\begin{Thm}
\label{thm:5}
In Theorem \ref{thm:4}, the critical set $S({{\pi}_{m+2,1} {\mid}_{X_{F_{D_{\{S_j\}_{j \in J}}},m}}}^{\prime})$ of the 1st derivative ${{\pi}_{m+2,1} {\mid}_{X_{F_{D_{\{S_j\}_{j \in J}}},m}}}^{\prime}$ is the set of all points satisfying at least one of the following four.
\begin{enumerate}
\item A point of the form $(x_1,c_{+}(x_1),1,{(0)}_{j=1}^{m-1})$ with the value of the 2nd derivative of $c_{+}$ at $x_1$ being $0$. 
\item A point of the form $(x_1,a_1c_{+}(x_1),a_1,{(0)}_{j=1}^{m-1})$ with the value of the 2nd derivative of $c_{+}$ at $x_1$ being $0$. 
\item A point of the form $(x_1,0,0,{(y_j)}_{j=1}^{m-1}) \in X_{F_{D_{\{S_j\}_{j \in J}}},m}$.
\item A point of the form $(x_1,tc_{+}(x_1),t,{(y_j)}_{j=1}^{m-1}) \in X_{F_{D_{\{S_j\}_{j \in J}}},m}$ with $x_1$ being a critical point of the function $c_{+}$.
\end{enumerate} 
In addition, a point is of the third or the fourth case above, if and only if the value of the 1st derivative ${{\pi}_{m+2,1} {\mid}_{X_{F_{D_{\{S_j\}_{j \in J}}},m}}}^{\prime}$ there is $1$.
\end{Thm}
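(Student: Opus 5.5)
The plan is to compute the 1st derivative explicitly. Write $c:=c_{+}$ and $G(t):=(t-a_1)(1-t)$. Then $X:=X_{F_{D_{\{S_j\}_{j \in J}}},m}$ is cut out by $x_2=tc(x_1)$ and $G(t)=\Sigma_j {y_j}^2$. By the proof of Theorem \ref{thm:2}, its normal space at each point is spanned by
$$N_1=(-tc'(x_1),1,-c(x_1),0),\qquad N_2=(0,0,G'(t),-2y).$$
Here $v_{{\rm u},p,+}$ is the unit vector along the orthogonal projection of $e_1$ onto $T_pX$. Hence ${{\pi}_{m+2,1}{\mid}_{X}}^{\prime}=\sqrt{1-q}$, where $q$ is the squared length of the projection of $e_1$ onto ${\rm span}(N_1,N_2)$. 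Since $e_1\cdot N_1=-tc'$ and $e_1\cdot N_2=0$, the Gram matrix gives
$$q=\frac{t^2c'^2\,|N_2|^2}{|N_1|^2|N_2|^2-(N_1\cdot N_2)^2}.$$

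The key simplification is that $|N_2|^2=G'^2+4|y|^2=G'^2+4G$. For our quadratic $G$ this equals the constant $(1-a_1)^2$. Set $k:=4/(1-a_1)^2$ and $u:=t^2c'(x_1)^2$. Then
$$q=\frac{u}{1+u+k\,c(x_1)^2G(t)},\qquad \frac1q=1+\frac{1+kc(x_1)^2G(t)}{t^2c'(x_1)^2}.$$
Thus $q$ depends only on $(x_1,t)$. Moreover $q<1$ everywhere, so $\sqrt{1-q}$ is smooth, and $q=0$ exactly when $tc'(x_1)=0$. This immediately gives the last assertion of the theorem: the derivative equals $1$ iff $t=0$ or $c'(x_1)=0$, which are cases 3 and 4. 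At such points $q$ attains its minimum $0$, so they are critical points.

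Next I treat the points with $q>0$ via the map $X\to\{(x_1,t)\}\cong\mathbb{R}\times I$. This map is a submersion over $a_1<t<1$. Over the endpoints $t=a_i$ it is the fold $(t,y)\mapsto t$ in the $(t,y)$-factor, so near such points the $t$-direction is killed. Consequently:
\begin{itemize}
\item At endpoint points ($y=0$, $t=a_1$ or $1$), $p$ is critical iff $\partial q/\partial x_1=0$. There $G=0$, so $q=u/(1+u)$ is increasing in $u$, and the condition becomes $c'c''=0$. With $c'\neq0$ this is $c''(x_1)=0$, which gives cases 1 and 2.
\item At interior points, $p$ is critical iff $\partial_{x_1}q=\partial_tq=0$.
\end{itemize}

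The main obstacle is showing that interior critical points with $tc'\neq 0$ do not exist, i.e.\ that $\partial_t(1/q)$ and $\partial_{x_1}(1/q)$ cannot vanish simultaneously there. I would first write $1/q-1=\phi(t)/c'(x_1)^2$ with $\phi(t)=(1+kc^2G(t))/t^2$. From $\partial_t$, the condition is $\phi'(t)=0$, i.e.
$$t\,kc^2G'(t)=2\bigl(1+kc^2G(t)\bigr).$$
Then I would combine this with $\partial_{x_1}$, which involves $c'$, $c''$, and the term $2kcc'G$. The aim is to show the resulting system forces a contradiction, using the signs of $G$ and $G'$ on $(a_1,1)$ together with $c>0$ and $a_1<0<1$. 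If this fails in general, I would check the paper's intended meaning and restrict to showing that the listed four sets are exactly where both conditions hold.
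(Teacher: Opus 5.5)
Your reduction is sound, and in places sharper than the paper's own sketch. The formula $q=u/(1+u+kc^2G)$ is correct; the paper's description of $v_{{\rm u},p,+}$ as projecting to a unit vector tangent to the graph $x_2=tc_{+}(x_1)$ is literally accurate only at the poles $t=a_1,1$, where $G=0$. Your treatment of the points with $tc'(x_1)=0$, of the final assertion, and of the poles is also fine. The gap is the decisive step: you never show that a point with $a_1<t<1$ and $tc'(x_1)\neq 0$ is not critical. You only announce that combining the $t$- and $x_1$-equations should give a contradiction, and you allow that it might fail. Without this, the inclusion of the critical set in the union of the four listed sets is unproved, and that is the substantive half of the theorem.

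The missing observation is that the $x_1$-equation is not needed, because $\partial_t q$ alone never vanishes there. Indeed
\[
\partial_t q=\frac{-t\,c'(x_1)^2\,h}{\bigl(1+u+kc(x_1)^2G(t)\bigr)^2},\qquad h:=tkc(x_1)^2G'(t)-2\bigl(1+kc(x_1)^2G(t)\bigr).
\]
For $G(t)=(t-a_1)(1-t)$ the quadratic terms cancel: $tG'(t)-2G(t)=2a_1-(1+a_1)t$. Hence $h=kc(x_1)^2\bigl(2a_1-(1+a_1)t\bigr)-2$ is affine in $t$. Its values at $t=a_1$ and $t=1$ are $kc^2a_1(1-a_1)-2$ and $-kc^2(1-a_1)-2$, both negative since $a_1<0<1$. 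So $h<0$ on all of $I$, and $\partial_t q\neq 0$ whenever $t\neq 0$ and $c'(x_1)\neq 0$; in fact $\partial_t q$ has the sign of $t$. Since $(x_1,t)$ is a submersion over $a_1<t<1$, such points are regular for $q$, hence for $\sqrt{1-q}$. This is exactly the paper's route: it disposes of interior points by ``some local change of $t$'' along the spheres $x_1=\mathrm{const}$, and uses $x_1$-variation only at $t=a_1,1$. With this computation inserted, your argument becomes a complete proof.
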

\begin{proof}
By the coordinate of the form $(x_1,x_2,t,{(y_j)}_{j=1}^{m-1})$, the functions $F_{D_{\{S_j\}_{j \in J}}}$ and $G_{D_{\{S_j\}_{j \in J}}}$, the construction, and the definition of the 1st derivatives of our real-valued functions, 
each vector $v_{{\rm u},p,+} \in T_p X^m \bigcap UTX^m$ with $X^m:=X_{F_{D_{\{S_j\}_{j \in J}}},m}$ in the definition above is represented as a vector of length $1$ tangent to a point of some graph $\{(x_1,tc_{+}(x_1) \mid x_1 \in \mathbb{R}\}$ after it is mapped to ${\mathbb{R}}^2$, (rigorously, $T{\mathbb{R}}^2$,) by (the differential of) ${\pi}_{m+2,2} {\mid}_{X_{F_{D_{\{S_j\}_{j \in J}}},m}}$.

We can immediately see that the third and fourth cases yield critical points of ${{\pi}_{m+2,1} {\mid}_{X_{F_{D_{\{S_j\}_{j \in J}}},m}}}^{\prime}$  and that the values of ${{\pi}_{m+2,1} {\mid}_{X_{F_{D_{\{S_j\}_{j \in J}}},m}}}^{\prime}$ are $1$ there.

For the first and the second case, consider $x_1$ and $t$ and investigate behavior according to changes of each of these values, or partial derivatives. For the change of values of $x_1$, a point in the first or the second case is regarded as a critical point of ${{\pi}_{m+2,1} {\mid}_{X_{F_{D_{\{S_j\}_{j \in J}}},m}}}^{\prime}$ , by fundamental properties of the 2nd derivative of $c_{+}$ and our manifolds and maps. We can also see that in the set of all points of the form $(x_{1,0},tc_{+}(x_{1,0}),t,{(0)}_{j=1}^{m-1}) \in X_{F_{D_{\{S_j\}_{j \in J}}},m}$ with $x_{1,0}$ being fixed, which is a smooth submanifold diffeomorphic to $S^{m-1}$ in $X_{F_{D_{\{S_j\}_{j \in J}}},m}$, ${{\pi}_{m+2,1} {\mid}_{X_{F_{D_{\{S_j\}_{j \in J}}},m}}}^{\prime}$ has a local extremum at $t=a_1,a_2$ ($t=a_1,1$), by our manifolds and maps. 
We can understand that the first and second cases yield critical points of ${{\pi}_{m+2,1} {\mid}_{X_{F_{D_{\{S_j\}_{j \in J}}},m}}}^{\prime}$.  

It is sufficient to investigate a point of the form $(x_1,tc_{+}(x_1),t,{(y_j)}_{j=1}^{m-1}) \in X_{F_{D_{\{S_j\}_{j \in J}}},m}$ satisfying the following.
\begin{itemize}
\item $t \neq 0$ and $x_1$ is not a critical point of the function $c_{+}$.
\item In the case $t=a_1,a_2$ ($t=a_1,1$), the 2nd derivative of $c_{+}$ at $x_1$ is not $0$.
\end{itemize}
More precisely, we see that such a point is not a critical point of ${{\pi}_{m+2,1} {\mid}_{X_{F_{D_{\{S_j\}_{j \in J}}},m}}}^{\prime}$. 
We consider the case $t:=t_0 \neq a_1$ and $t=t_0 \neq a_2$: we can see that in the set of all points of the form $(x_{1,0},t_0c_{+}(x_{1,0}),t_0,{(0)}_{j=1}^{m-1}) \in X_{F_{D_{\{S_j\}_{j \in J}}},m}$ with $x_{1,0}$ being fixed, which is a smooth submanifold diffeomorphic to $S^{m-1}$ in $X_{F_{D_{\{S_j\}_{j \in J}}},m}$, ${{\pi}_{m+2,1} {\mid}_{X_{F_{D_{\{S_j\}_{j \in J}}},m}}}^{\prime}$ does not have a local extremum or regarded as a critical point of ${{\pi}_{m+2,1} {\mid}_{X_{F_{D_{\{S_j\}_{j \in J}}},m}}}^{\prime}$, for some local change of $t$, at the point $(x_{1,0},t_0c_{+}(x_{1,0}),t_0,{(y_j)}_{j=1}^{m-1}) \in X_{F_{D_{\{S_j\}_{j \in J}}},m}$. In the case $t:=t_0=a_1,a_2$ with $x_1:=x_{1,0}$, under the conditions, for some local change of values of $x_1$ (at $x_{0,1}$), the point $(x_{1,0},t_0c_{+}(x_{1,0}),t_0,{(y_j)}_{j=1}^{m-1}) \in X_{F_{D_{\{S_j\}_{j \in J}}},m}$ can not be regarded as a critical point of ${{\pi}_{m+2,1} {\mid}_{X_{F_{D_{\{S_j\}_{j \in J}}},m}}}^{\prime}$.
 
We can also understand the following immediately by the arguments: a point is of the third or the fourth case in the statement, if and only if the value of ${{\pi}_{m+2,1} {\mid}_{X_{F_{D_{\{S_j\}_{j \in J}}},m}}}^{\prime}$ there is $1$.

See also Figure \ref{fig:1}.

This completes the proof.
\end{proof}

\begin{figure}
\includegraphics[width=40mm, height=40mm]{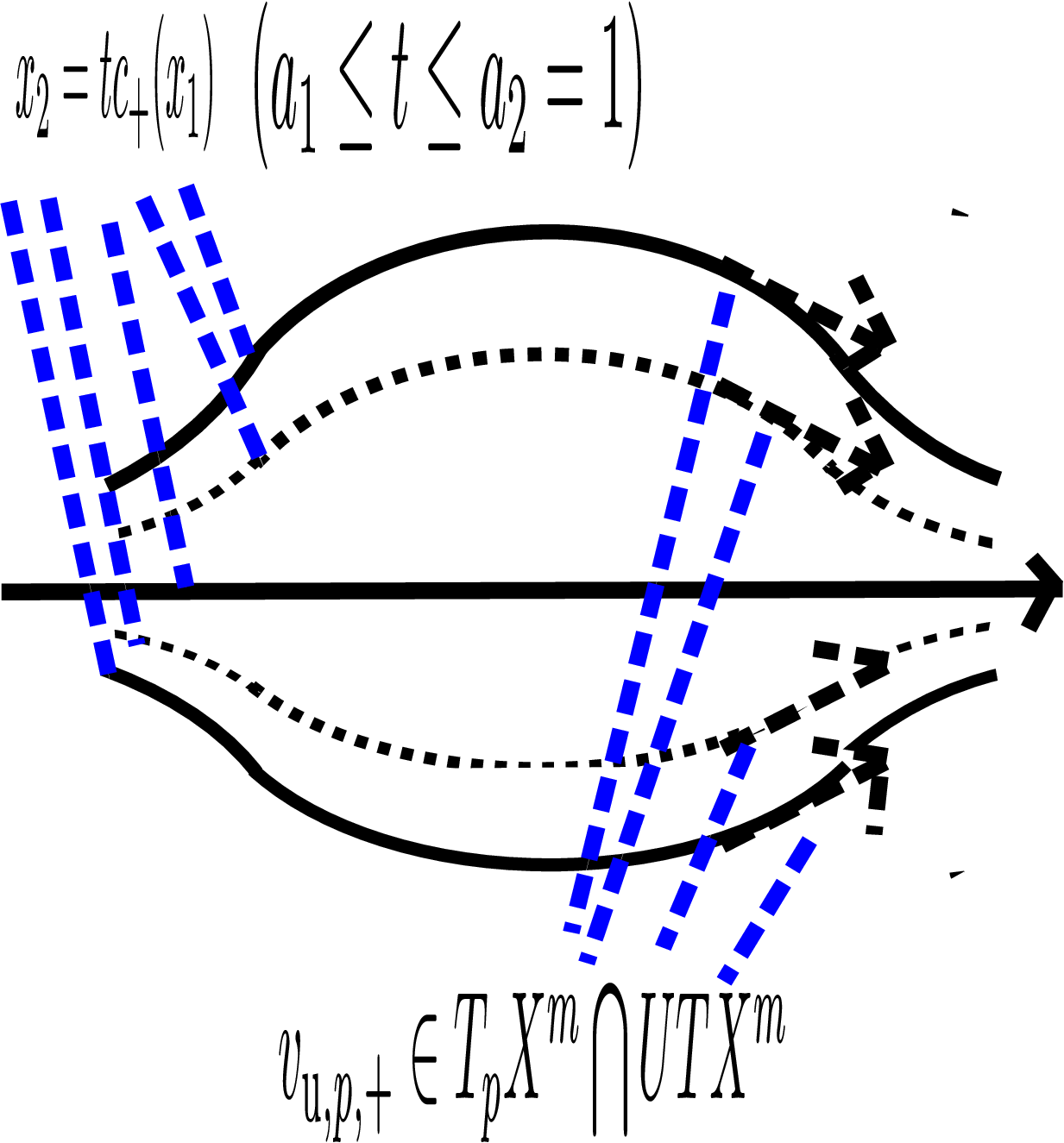}

\caption{The image of ${\pi}_{m+2,1} {\mid}_{X_{F_{D_{\{S_j\}_{j \in J}}},m}}$, the graphs  $\{(x_1,tc_{+}(x_1)) \mid x_1 \in \mathbb{R}\}$, and the tangent vectors $v_{{\rm u},p,+} \in T_p X^m \bigcap UTX^m$ in forms of the resulting vectors after the projection to ${\mathbb{R}}^2$. The tangent vectors in the forms of the vectors of ${\mathbb{R}}^2$ are all of length $1$ and tangent to graphs $\{(x_1,tc_{+}(x_1)) \mid x_1 \in \mathbb{R}\}$ at the points ${\pi}_{m+2,2}(p)$.}
\label{fig:1}
\end{figure}
We discuss an important example related to difference between Theorem \ref{thm:1} and Theorems \ref{thm:2} and \ref{thm:3}. 
\begin{Thm}
\label{thm:6}
In Theorems \ref{thm:4} and \ref{thm:5}, let $c_{+}:\mathbb{R} \rightarrow \mathbb{R}$ be an even smooth function whose value is always positive at any point $p \in \mathbb{R}$, which has the unique maximal value at $0 \in \mathbb{R}$, and which has no critical point except $0$.
\begin{enumerate}
\item We have a case of Theorem \ref{thm:1} by $n=2$, $l=2$ and $f_j$ and $S_j$ as follows{\rm :} $f_1(x_1,x_2):=c_{+}(x_1)-x_2$ with $S_1:=\{(x_1,x_2)\mid f_1(x_1,x_2)=0\}$ and $f_2(x_1,x_2):=x_2+c_{+}(x_1)$ with $S_2:=\{(x_1,x_2)\mid f_2(x_1,x_2)=0\}$.
\item The value of ${{\pi}_{m+1,1} {\mid}_{X_{D_{\{S_j,f_j\}_{j=1}^l},m}}}^{\prime}$ is $1$ at $p$ if and only if $p$ is of the form $(0,x_2,{(y_j)}_{j=1}^{m-1}) \in  X_{D_{\{S_j,f_j\}_{j=1}^l},m}$. The set of all such points is a smooth submanifold of  $X_{D_{\{S_j,f_j\}_{j=1}^l},m}$ and diffeomorphic to $S^{m-1}$.
\end{enumerate}
\end{Thm}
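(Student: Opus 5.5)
The plan is to verify the hypotheses of Theorem \ref{thm:1} directly for part (1), and for part (2) to reduce the 1st derivative to the length of an orthogonal projection of the unit vector $e_1$ of the $x_1$-axis.

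For (1), put $D:=\{(x_1,x_2) \mid -c_{+}(x_1)<x_2<c_{+}(x_1)\}=\{f_1>0\} \bigcap \{f_2>0\}$ and take $U_D:={\mathbb{R}}^2$.
\begin{itemize}
\item Since $c_{+}>0$, the graphs $S_1=\{x_2=c_{+}(x_1)\}$ and $S_2=\{x_2=-c_{+}(x_1)\}$ are disjoint.
\item Each $S_j$ is the whole zero set of $f_j$, and is a smooth curve with no boundary.
\item $\frac{\partial f_j}{\partial x_2}=\mp 1 \neq 0$, so $f_j {\mid}_{S_j}$ has no critical point.
\item $\overline{D}^{{\mathbb{R}}^2}-D=S_1 \sqcup S_2$.
\end{itemize}
Hence Theorem \ref{thm:1} applies and gives
\[
X:=X_{D_{\{S_j,f_j\}_{j=1}^2},m}=\{(x_1,x_2,y) \mid x_2^2+{\Sigma}_{j=1}^{m-1} y_j^2=c_{+}(x_1)^2\}.
\]
The condition $x \in \overline{D}$ is automatic here, since $x_2^2 \leq c_{+}(x_1)^2$ holds on this zero set. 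So $X$ is the regular zero set of
\[
H(x_1,x_2,y):=x_2^2+{\Sigma}_{j=1}^{m-1} y_j^2-c_{+}(x_1)^2 .
\]
Its gradient is $\nabla H=(-2c_{+}c_{+}^{\prime},2x_2,2y)$. This never vanishes on $X$, because $x_2^2+|y|^2=c_{+}(x_1)^2>0$ forces $(x_2,y)\neq 0$.

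For (2), the key step is to identify the 1st derivative. Let $P_p$ denote orthogonal projection of ${\mathbb{R}}^{m+1}$ onto $T_pX$. The differential of ${\pi}_{m+1,1} {\mid}_X$ at $p$ is $v \mapsto \langle e_1,v\rangle$ on $T_pX$, and this equals $\langle P_pe_1,v\rangle$. Its kernel is therefore $(P_pe_1)^{\perp} \cap T_pX$.
\begin{itemize}
\item The unit vector $v_{{\rm u},p,+}$ orthogonal to this kernel with positive image is $P_pe_1/|P_pe_1|$.
\item $P_pe_1 \neq 0$: otherwise $e_1 \parallel \nabla H$, which would force $(x_2,y)=0$, and that is impossible on $X$.
\item Consequently ${{\pi}_{m+1,1}{\mid}_X}^{\prime}(p)=\langle e_1,P_pe_1\rangle/|P_pe_1|=|P_pe_1|$.
\end{itemize}
This step is the main obstacle: the paper's definition is informal, so I would state this identification carefully. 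It is clearly the intended meaning, and it is consistent with the bound $0<a_{v_{{\rm u},p,+}}\leq 1$ stated there.

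Given this formula, the value is $1$ exactly when $|P_pe_1|=1$, i.e. $e_1 \in T_pX$, i.e. $\langle e_1,\nabla H(p)\rangle=-2c_{+}(x_1)c_{+}^{\prime}(x_1)=0$. Since $c_{+}>0$, this is equivalent to $c_{+}^{\prime}(x_1)=0$, and by hypothesis $0$ is the only critical point of $c_{+}$, so the condition is $x_1=0$. The set of such points is
\[
X \cap \{x_1=0\}=\{(0,x_2,y) \mid x_2^2+|y|^2=c_{+}(0)^2\},
\]
a round $(m-1)$-sphere of radius $c_{+}(0)>0$ in the $m$-dimensional affine subspace $\{x_1=0\}$, hence diffeomorphic to $S^{m-1}$. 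It is a smooth submanifold of $X$ because $0$ is a regular value of ${\pi}_{m+1,1}{\mid}_X$: we already saw that this restriction has no critical point, since $P_pe_1\neq 0$ everywhere. Evenness and maximality at $0$ are not needed beyond guaranteeing that $0$ is the unique critical point of $c_{+}$.
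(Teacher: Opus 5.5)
Your proof is correct and follows the same idea as the paper's: both reduce ``the 1st derivative equals $1$ at $p$'' to ``$v_{{\rm u},p,+}$ is parallel to $e_1$'', i.e.\ $e_1\in T_pX$. Where the paper only appeals to symmetry and the behavior of $c_{+}$, you justify this with the formula $|P_pe_1|$ and the normal computation $\langle e_1,\nabla H\rangle=-2c_{+}c_{+}^{\prime}$, and you also verify part (1) and the $S^{m-1}$ claim, which the paper leaves implicit.
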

\begin{proof}
We use the symmetry.
Each vector $v_{{\rm u},p,+} \in T_p X^m \bigcap UTX^m$ with $X^m:=X_{F_{D_{\{S_j\}_{j \in J}}},m}$ in the definition above is represented as a vector of length $1$ mapped to $\mathbb{R}$, (rigorously, $T \mathbb{R}$,) by (the differential of) ${\pi}_{m+2,1} {\mid}_{X_{D_{\{S_j,f_j\}_{j=1}^l},m}}$. It is mapped to the unique vector of length $1$ if and only if $v_{{\rm u},p,+}$ is parallel to a vector whose 1st component is $1$ and whose remaining components are $0$ as a vector in ${\mathbb{R}}^{m+1}$. Note that rigorously, this is also seen as a vector of $T_p {\mathbb{R}}^{m+1}$. 
By our construction, the conditions on the global behavior of $c$ and $(c_{+}(x_1)-x_2)(x_2+c_{+}(x_1))$ (with the relation $(c_{+}(x_1)-x_2)(x_2+c_{+}(x_1))-{\Sigma}_{j=1}^{m-1} {y_j}^2=0$ for $(x_1,x_2,{(y_j)}_{j=1}^{m-1}) \in  X_{D_{\{S_j,f_j\}_{j=1}^l},m}$) and the derivation of $c_{+}$, and related symmetry, the vector $v_{{\rm u},p,+}$ is parallel to a vector whose 1st component is $1$ and whose remaining components are $0$ if and only if $p$ is of the form $(0,x_2,{(y_j)}_{j=1}^{m-1}) \in  X_{D_{\{S_j,f_j\}_{j=1}^l},m}$. These necessary and sufficient conditions complete the proof.
\end{proof}

Compare Theorem \ref{thm:6} to the case of Theorems \ref{thm:4} and \ref{thm:5} with $(a_1,a_2)=(-1,1)$.


\begin{Ex}
\label{ex:4}
For Theorem \ref{thm:6}, $c_{+}(x)=\frac{1}{x^2+1}$ is of simplest examples.
\end{Ex}

\section{Conflict of interest and Data availability.}
 The author is a researcher at Osaka Central Advanced Mathematical Institute (OCAMI researcher), funded by MEXT Promotion of Distinctive Joint Research Center Program JPMXP0723833165. He thanks members there for the hospitality, where he is not employed by the institute or the projects. 
 
No data other than the present file is generated, related to the present paper. We do not assume non-trivial arguments in preprints being still unpublished. To some extent, we may refer to these preprints.


\begin{thebibliography}{25}
\bibitem{buchstaberpanov} V. M. Buchstaber and T. E. Panov, \textsl{Toric topology}, Mathematical Surveys and Monographs, Vol. 204, American Mathematical Society, Providence, RI, 2015.

	%


\bibitem{burletderham} O. Burlet and G. de Rham, \textsl{Sur certaines applications g\'en\'eriques d'une vari\'et\'e close a $3$ dimensions dans le plan}, Enseign. Math. 20 (1974). 275--292.

\bibitem{eliashberg1} Y. Eliashberg, \textsl{On singularities of folding type}, Math. USSR Izv. 4 (1970). 1119--1134.
\bibitem{eliashberg2} Y. Eliashberg, \textsl{Surgery of singularities of smooth mappings}, Math. USSR Izv. 6 (1972). 1302--1326.
\bibitem{furuyaporto} Y. K. S. Furuya and P. Porto, \textsl{On special generic maps from a closed manifold into the plane}, Topology Appl. 35 (1990), 41--52.

\bibitem{gelbukh1} I. Gelbukh, \textsl{A finite graph is homeomorphic to the Reeb graph of a Morse-Bott function}, Mathematica Slovaca, 71 (3), 757--772, 2021; doi: 10.1515/ms-2021-0018. 
\bibitem{gelbukh2} I. Gelbukh, \textsl{Morse-Bott functions with two critical values on a surface}, Czechoslovak Mathematical Journal, 71 (3), 865--880, 2021; doi: 10.21136/CMJ.2021.0125-20. 
\bibitem{gelbukh3} I. Gelbukh, \textsl{On the topology of the Reeb graph}, Publicationes Mathematicae Debrecen 104(3--4) (2023), 343--365.


\bibitem{gelbukh4} I. Gelbukh, \textsl{Realization of a digraph as the Reeb graph of a Morse-Bott function on a given surface}, Topology and its Applications, 2024. 
\bibitem{gelbukh5} I. Gelbukh, \textsl{Reeb Graphs of Morse-Bott Functions on a Given Surface}, Bulletin of the Iranian Mathematical Society, Volume 50 Article number 84, 2024, 1--17. 


\bibitem{kitazawa1} N. Kitazawa, \textsl{On Reeb graphs induced from smooth functions on $3$-dimensional closed orientable manifolds with finitely many singular values}, Topol. Methods in Nonlinear Anal. Vol. 59 No. 2B, 897--912, arXiv:1902.08841.
\bibitem{kitazawa2} N. Kitazawa, \textsl{On Reeb graphs induced from smooth functions on closed or open surfaces}, Methods of Functional Analysis and Topology Vol. 28 No. 2 (2022), 127--143, arXiv:1908.04340.
\bibitem{kitazawa3} N. Kitazawa, \textsl{Real algebraic functions on closed manifolds whose Reeb graphs are given graphs}, Methods of Functional Analysis and Topology Vol. 28 No. 4 (2022), 302--308, arXiv:2302.02339, 2023. 

\bibitem{kitazawa4} N. Kitazawa, \textsl{Constructing Morse functions with given Reeb graphs and level sets}, accepted for publication in Topol. Methods in Nonlinear Anal., arXiv:2108.06913
(, where the title has been changed from the title there), 2025.
\bibitem{kitazawa5} N. Kitazawa, \textsl{Reconstructing real algebraic maps locally like moment maps with prescribed images and compositions with the canonical projections to the $1$-dimensional real affine space}, arXiv:2303.10723.
\bibitem{kitazawa6} N. Kitazawa, \textsl{Some remark on real algebraic maps which are topologically special generic maps and generalize the canonical projections of the unit spheres}, arXiv:2312.10646, 2024.
\bibitem{kitazawa7} N. Kitazawa, \textsl{A note on Reeb spaces of explicit real analytic functions}, a revised version is submitted to a refereed journal based on positive comments ("Major revision"), arXiv:2601.11648, 2026/1.

\bibitem{kitazawa8} N. Kitazawa, \textsl{Fundamental examples of Reeb spaces of smooth functions defined from two graphs of smooth functions with same asymptotic behaviors},  arXiv:2602.17014, 2026/2.

\bibitem{kitazawa9} N. Kitazawa, \textsl{Reeb spaces of functions being analytic on dense subsets and their graph structures}, arXiv:2602.23380.
\bibitem{kitazawa10} N. Kitazawa, \textsl{Reeb spaces of smooth functions associated to globally similar graphs of smooth functions}, arXiv:2603.02791.


\bibitem{masumotosaeki} Y. Masumoto and O. Saeki, \textsl{A smooth function on a manifold with given Reeb graph}, Kyushu J. Math. 65 (2011), 75--84.

\bibitem{michalak} L. P. Michalak, \textsl{Realization of a graph as the Reeb graph of a Morse function on a manifold}. Topol. Methods in Nonlinear Anal. 52 (2) (2018), 749--762, arXiv:1805.06727.
\bibitem{milnor1} J. Milnor, \textsl{Morse Theory}, Annals of Mathematic Studies AM-51, Princeton University Press; 1st Edition (1963.5.1).
\bibitem{milnor2} J. Milnor, \textsl{Lectures on the h-cobordism theorem}, Math. Notes, Princeton Univ. Press, Princeton, N.J. 1965.
\bibitem{reeb} G. Reeb, \textsl{Sur les points singuliers d\'{}une forme de Pfaff compl\'{e}tement int\`{e}grable ou d\'{}une fonction num\'{e}rique}, Comptes Rendus
 Hebdomadaires des S\'{e}ances de I\'{}Acad\'{e}mie des Sciences 222 (1946), 847--849.
\bibitem{saeki1} O. Saeki, \textsl{Notes on the topology of folds}, J. Math. Soc. Japan Volume 44, Number 3 (1992), 551--566.
\bibitem{saeki2} O. Saeki, \textsl{Topology of special generic maps of manifolds into Euclidean spaces}, Topology Appl. 49 (1993), 265--293, we can also find at "https://core.ac.uk/download/pdf/81973672.pdf" for example.
\bibitem{sharko} V. Sharko, \textsl{About Kronrod-Reeb graph of a function on a manifold}, Methods of Functional Analysis and
 Topology 12 (2006), 389--396.
\bibitem{thom} R. Thom, \textsl{Les singularites des applications differentiables}, Ann. Inst. Fourier (Grenoble) 6 (1955-56), 43--87.
\bibitem{whitney} H.  Whitney,  \textsl{On singularities of mappings of Euclidean spaces: I,  mappings of the plane into the plane},  Ann.  of Math.  62 (1955),  374--410. 

	
\end{thebibliography}
\end{document}